\newif\ifrevisionmode
\numberwithin{equation}{section}
\theoremstyle{plain}
\newtheorem{theorem}{Theorem}[section]
\newtheorem{lemma}[theorem]{Lemma}
\theoremstyle{definition}
\theoremstyle{remark}
\newtheorem{remark}[theorem]{Remark}
\renewcommand{\Re}{\operatorname{Re}}
\renewcommand{\Im}{\operatorname{Im}}
\newcommand{\sym}{\operatorname{sym}}
\newcommand{\GL}{\operatorname{GL}}
\newcommand{\SO}{\operatorname{SO}}
\newcommand{\Sp}{\operatorname{Sp}}
\newcommand{\GSp}{\operatorname{GSp}}
\newcommand{\dd}{\mathrm{d}}
   \DeclareFontFamily{U}{wncy}{}
    \DeclareFontShape{U}{wncy}{m}{n}{<->wncyr10}{}
    \DeclareSymbolFont{mcy}{U}{wncy}{m}{n}
    \DeclareMathSymbol{\Sh}{\mathord}{mcy}{"58}
\def\@tocline#1#2#3#4#5#6#7{\relax
  \ifnum #1>\c@tocdepth 
  \else
    \par \addpenalty\@secpenalty\addvspace{#2}%
    \begingroup \hyphenpenalty\@M
    \@ifempty{#4}{%
      \@tempdima\csname r@tocindent\number#1\endcsname\relax
    }{%
      \@tempdima#4\relax
    }%
    \parindent\z@ \leftskip#3\relax \advance\leftskip\@tempdima\relax
    \rightskip\@pnumwidth plus4em \parfillskip-\@pnumwidth
    #5\leavevmode\hskip-\@tempdima
      \ifcase #1
       \or\or \hskip 1em \or \hskip 2em \else \hskip 3em \fi%
      #6\nobreak\relax
    \hfill\hbox to\@pnumwidth{\@tocpagenum{#7}}\par
    \nobreak
    \endgroup
  \fi}
\begin{document}

\title[Symmetric cubes]
{Decorrelation of $\mathrm{SO}(5)\times \mathrm{SO}(2)$ Bessel periods for symmetric cubes of $\mathrm{GL}(2)$}
\author{Shenghao Hua~\orcidlink{0000-0002-7210-2650}}
\address[1]{Data Science Institute
and School of Mathematics \\ Shandong University \\ Jinan \\ Shandong 250100 \\China}
\email{huashenghao@vip.qq.com}
\curraddr
{\itshape EPFL-SB-MATH-TAN
Station 8,
1015 Lausanne, Switzerland}

\author{Xinchen Miao~\orcidlink{0009-0002-8480-274X}}
\address[2]{Mathematisches Institut, Endenicher Allee 60, Bonn, 53115, Germany}
\email{miao@math.uni-bonn.de, olivermiaoxinchen@gmail.com}


\begin{abstract}
We demonstrate that for symmetric cubes of algebraic regular Hecke eigenforms on $\mathrm{GL}(2)$, a decorrelation phenomenon occurs for global Bessel periods of $\mathrm{SO}(5)\times \mathrm{SO}(2)$ averaged over imaginary quadratic fields, conditional on the Generalized Riemann Hypothesis.
\end{abstract}

\keywords{Bessel periods, quadratic twists, $L$-functions, mixed moments, decorrelation}

\subjclass[2020]{11F12, 11F30, 11F66}

\maketitle

\section{Introduction} \label{sec:Intr}

The Gan--Gross--Prasad conjecture studies the branching behavior of automorphic representations of certain classical groups and predicts that when a representation of a larger group is restricted to a subgroup, the dimension of certain Hom spaces is precisely related to the value of the corresponding $L$-function at special points; the Bessel period is a special type of period defined by an integral that reflects the occurrence of representations in certain Bessel models and is often used to characterize the nonvanishing of $L$-functions; all these belong to the broader framework of the Langlands program which aims to unify number theory, algebraic geometry and harmonic analysis through the connections among symmetry representation and $L$-functions; see \S\ref{sec:bessel} for further details.

For an even weight $k$ algebraic regular (in the sense as in Clozel~\cite{Clozel2015}) Hecke eigenform $f$ of level $N$, it follows from the work of Kim and Shahidi~\cite{KimShahidi2001, KimShahidi2002a, KimShahidi2002b} that $\sym^3 f$ is a self-dual cuspidal automorphic representation of $\GL(4)$ with trivial central character, and may also be viewed as a representation of $\mathrm{PGSp}(4)\simeq \mathrm{SO}(3,2)$. 
Furthermore, Banerjee--Mandal--Mondal~\cite{BMM2005} and Conti~\cite{Conti2019} showed that the level $M$ of $\sym^3 f$ has all its prime divisors contained among those of $N$. 
Ramakrishnan and Shahidi~\cite{RamakrishnanShahidi2007} proved that for an elliptic curve $E$, the representation $\sym^3 E$ corresponds to a Siegel modular form, while Loeffler and Zerbes~\cite{LoefflerZerbes2023} established that $\sym^3 f$ is a vector-valued Siegel modular form of weight $(2k-1, k+1)$.

In this paper, we demonstrate that for the symmetric cubes of algebraic regular Hecke eigenforms, one obtains a \emph{decorrelation phenomenon} for global Bessel periods averaged over imaginary quadratic fields, conditional on the Generalized Riemann Hypothesis (GRH).
This phenomenon does not hold for all global Bessel periods of $\mathrm{SO}(5)\times \mathrm{SO}(2)$, such behavior is predicted by the Keating--Snaith conjecture, what is special about the symmetric cube is that its family of quadratic twisted $L$-functions is of orthogonal type. 
In particular, for a Rankin--Selberg form $\varphi$, this is (and must be) false, as predicted by the Keating--Snaith conjecture for quadratic-twisted families of $L$-functions associated with $\GL_2 \times \GL_2$ forms of symplectic type, since such families are naturally expected to exhibit very large values.
For its connection with moments of $L$-functions, see \S\ref{sec:quadratic}; for a general discussion of families of $L$-functions and their symmetry types, see Soundararajan's 2022 ICM proceedings article~\cite{Soundararajan2021}.

\begin{theorem}[Decorrelation of global Bessel periods]\label{thm:indep}
Let $1 \le i \le m$, and suppose each $f_i$ is an even-weight $k_i$ algebraic regular Hecke eigenform, with level $N_i$.
Assume the forms $f_1, \dots, f_m$ are pairwise distinct, and each symmetric cube $\sym^3 f_i$ has level $M_i$.
Let $M_0 = [8, M_1, \dots, M_m]$.
Let $a \bmod{M_0}$ denote a residue class with $a \equiv 1 \pmod{4}$ and $(a, M_0) = 1$.

Assume the GRH holds for
\begin{equation*}
L(s,\chi_d), \quad
L(s, \mathrm{sym}^k f_i), \quad
L(s, \mathrm{sym}^3 f_i\times \chi_d), \quad
L(s, \sym^3 f_i \times \sym^3 f_j)
\end{equation*}
for all $k=2,4,6$, all $f_i\neq f_j$ and for all $D \le - d \le 2D$ with $d \equiv a \pmod{M_0}$.

Let $\varphi_i\in \sym^3 f_i$.
Then for any non-trivial additive characters $\psi_1, \dots, \psi_m$ of $\mathbb{Q}\backslash \mathbb{A}_\mathbb{Q}$, and any $\varepsilon > 0$, we have
\begin{equation}\label{eqn:decorperoid}
\frac{1}{D}
\sum_{\substack{K=\mathbb{Q}(\sqrt{d})\\
D \le -d \le 2D \\ \mu(|d|)^2 = 1\\
d \equiv a \pmod{M_0}}}
\prod_{i=1}^{m}
|B_K(\varphi_i,1;\psi)|
\ll_{f_1,\dots,f_m,\psi_1, \dots, \psi_m,\varepsilon}
(\log D)^{-\frac{m}{8} + \varepsilon}
\end{equation}
as $D \to \infty$, where $B_K(\varphi_i,1;\psi)$ denotes the global Bessel periods as defined in \eqref{eqn:defofbessel}.
\end{theorem}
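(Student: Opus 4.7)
\emph{Proof plan.} The argument splits naturally into three ingredients: an Ichino--Ikeda-type identity converting the Bessel periods into central $L$-values; the conditional GRH fractional moment machinery of Soundararajan--Harper--Radziwi{\l}{\l} in the orthogonal regime; and Rankin--Selberg decorrelation that disentangles the $m$ factors at the Gaussian variance level.

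\emph{Reduction to a joint $L$-value moment.} The refined Gan--Gross--Prasad conjecture for $\mathrm{SO}(5)\times\mathrm{SO}(2)$ provides an identity of the shape
\[
|B_K(\varphi_i,1;\psi)|^2 = C_i(d)\cdot L\!\left(\tfrac12,\sym^3 f_i\right)\cdot L\!\left(\tfrac12,\sym^3 f_i\otimes\chi_d\right),
\]
with $C_i(d)$ a product of local integrals and inverse adjoint $L$-values. For $d$ in the fixed residue class modulo $M_0$, $C_i(d)$ is of size $|d|^{o(1)}$ under GRH for $L(s,\sym^k f_i)$ with $k=2,4,6$. Theorem~\ref{thm:indep} therefore reduces to proving
\[
\frac{1}{D}\sum_{d} \prod_{i=1}^m L\!\left(\tfrac12,\sym^3 f_i\otimes\chi_d\right)^{1/2} \;\ll_\varepsilon\; (\log D)^{-m/8+\varepsilon}.
\]

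\emph{Joint GRH upper bound with decorrelation.} Under GRH, Soundararajan's conditional inequality---in its orthogonal-family refinement by Heap--Soundararajan following the Radziwi{\l}{\l}--Soundararajan template---dominates each $\log L(\tfrac12,\sym^3 f_i\otimes\chi_d)$ by a short Dirichlet polynomial over primes $p\le x=(\log D)^{A}$, plus a tail absorbable by a higher moment. Summing over $i$, exponentiating by $\tfrac12$, and averaging over $d$ via the quadratic large sieve yields a Gaussian bound with exponent
\[
\tfrac18\sum_{p\le x}\frac{w_x(p)^2}{p}\Bigl(\sum_{i=1}^m \lambda_{\sym^3 f_i}(p)\Bigr)^{\!2}.
\]
Expanding the square, the diagonal $i=j$ terms each produce $\log\log x+O(1)$ from the simple pole of $L(s,\sym^3 f_i\times\sym^3 f_i)$ at $s=1$, while for $i\neq j$ the distinctness of the $f_i$ makes $L(s,\sym^3 f_i\times\sym^3 f_j)$ entire, and GRH yields $\sum_{p\le x}\lambda_{\sym^3 f_i}(p)\lambda_{\sym^3 f_j}(p)/p=O(1)$ uniformly in $x$; this is the crucial decorrelation input, so the joint variance factorises as $m\log\log x+O(1)$. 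The orthogonal symmetry type of each twist family---visible in the pole at $s=1$ of $L(s,\sym^3 f_i,\wedge^2)$ coming from the summand $\mathbf{1}\subset\wedge^2\sym^3 f_i=\sym^4 f_i\oplus\mathbf{1}$---supplies the negative log correction predicted by Keating--Snaith, so each factor becomes $(\log D)^{-1/8+o(1)}$ and the product is bounded by $(\log D)^{-m/8+\varepsilon}$.

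\emph{Main obstacle.} The principal technical difficulty is running the orthogonal fractional moment sharpening \emph{jointly} for $m$ distinct symmetric cubes: a direct H\"older reduction to single $\sym^3 f_i$ moments would forfeit the Rankin--Selberg cross-term cancellations. One must interlace the root-number dichotomy responsible for the $-1/8$ saving per factor with the off-diagonal bound for $L(s,\sym^3 f_i\times\sym^3 f_j)$ throughout the Dirichlet polynomial expansion, so that both the orthogonal $(\log D)^{-1/8}$ per factor and the decorrelation survive simultaneously.
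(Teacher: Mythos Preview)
Your plan matches the paper's: reduce via the Furusawa--Morimoto period formula \eqref{eqn:period} to the joint fractional moment $\frac{1}{D}\sum_d\prod_i L(\tfrac12,\sym^3 f_i\otimes\chi_d)^{1/2}$, then run Soundararajan's conditional method with the Rankin--Selberg input (GRH for $L(s,\sym^3 f_i\times\sym^3 f_j)$, $i\neq j$) killing the off-diagonal variance, while the prime-square Hecke identity \eqref{eqn:heckerelation} supplies the orthogonal mean shift $-\tfrac12\log\log x$ per factor that combines with the Gaussian $+\tfrac{m}{8}\log\log x$ to give the exponent $-m/8$.

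Two small points to tighten. First, the truncation $x=(\log D)^A$ is too short: the error term $O(\log|d|/\log x)$ in Chandee's inequality (Lemma~\ref{lemma:LogLfunctions}) would then be of size $\log D/\log\log D$ and swamp everything; the paper takes $x=D^{1/(\varepsilon V)}$ as in Soundararajan's original optimisation. Second, writing $C_i(d)=|d|^{o(1)}$ is too weak for the conclusion, which only saves a power of $\log D$; what you actually need (and what holds) is $C_i(d)\ll(\log\log|d|)^{O(1)}$, coming from $L(1,\chi_d)^{-1}\ll\log\log|d|$ under GRH together with the fact that the congruence $d\equiv a\pmod{M_0}$ pins down the ramified local factors $J_v(\pi_v)$ uniformly.
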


We prove Theorem~\ref{thm:indep} in \S\ref{sec:proofofmain}.

\section{Bessel periods for \texorpdfstring{$\mathrm{SO}(5)\times \mathrm{SO}(2)$}{SO(5), SO(2)}}\label{sec:bessel}

\subsection{Automorphic periods}

Let $\Pi$ be a cuspidal automorphic representation of a reductive group $\mathrm{G}(\mathbb{A}_F)$, where $\mathbb{A}_F$ denotes the adele ring of a fixed number field $F$. For $\phi \in \Pi$ and a closed subgroup $H \subseteq \mathrm{G}$, one of the central problems in the theory of automorphic forms, $L$-functions, and the Langlands program is to study the $H$-period
\[
   \int_{H(F) \backslash H(\mathbb{A}_F)} \phi(h) \, \dd h,
\]
for a suitable choice of $H$. 
In many significant cases of the pair $(G,H)$, this $H$-period is closely and beautifully related to the central value of the automorphic $L$-function $L\!\left(1/2, \Pi\right)$.

The first example can be traced back to E.~Hecke. Let $\pi$ be a cuspidal automorphic representation of $\mathrm{PGL}_2(\mathbb{A}_F)$. Let $T$ be the diagonal subgroup, which can be embedded naturally into $\mathrm{PGL}_2$. Then, for every $\varphi \in \pi$, we have an identity of the following shape:
\[
   \int_{T(F)\backslash T(\mathbb{A}_F)} 
      \varphi \!\begin{pmatrix} a & \\ & 1 \end{pmatrix} \, \dd a 
   \;\approx\; L\!\left(\tfrac{1}{2}, \pi \right),
\]
where $L(s,\pi)$ denotes the degree-two standard $L$-function of $\pi$, and the symbol ``$\approx$'' indicates that the equality holds up to certain nonzero multiplicative factors.

Another important example is the celebrated Waldspurger formula. Let $E/\mathbb{Q}$ be a quadratic extension. In the 1980s, Waldspurger established a remarkable formula for the central value of the base-change $L$-function
\[
   L(s,\pi_E) = L(s,\pi)\, L(s,\pi \times \chi_E),
\]
where $\chi_E$ is the idele class character associated with the extension $E/\mathbb{Q}$. Waldspurger’s formula takes, roughly speaking, the following form:
\[
   \left| \int_{T(\mathbb{Q}) \backslash T(\mathbb{A})} 
      \varphi(t)\, \dd t \right|^2 
   \;\approx\; L\!\left( \tfrac{1}{2}, \pi_E \right),
\]
for $\varphi \in \pi$, where $T$ is a torus in $\mathrm{PGL}_2$ isomorphic to 
$\mathrm{Res}_{E/F}(\mathbb{G}_m)/\mathbb{G}_m$.

This result has led, in subsequent years, to striking arithmetic applications, including deep connections with the Birch--Swinnerton-Dyer conjecture and the theory of $p$-adic $L$-functions.

A natural generalization of Hecke's integral is provided by the theory of Rankin--Selberg convolutions, as developed by Jacquet, Piatetski-Shapiro, and Shalika \cite{JPSS1983}. Let $\mathrm{G}=\GL(n+1)$ and $H=\GL(n)$, where $H$ embeds into $\mathrm{G}$ in the natural way. For two cuspidal automorphic representations $\pi_1$ and $\pi_2$ of $\GL_{n+1}(\mathbb{A}_F)$ and $\GL_n(\mathbb{A}_F)$ respectively, we have
\[
   \int_{\GL_n(F)\backslash \GL_n(\mathbb{A}_F)} 
      \varphi\!\begin{pmatrix} h & \\ & 1 \end{pmatrix} \, \sigma(h)\, \dd h 
   \;\approx\; L\!\left(\tfrac{1}{2}, \pi_1 \times \pi_2 \right),
\]
where $\varphi \in \pi_1$, $\sigma \in \pi_2$, and $L(s,\pi_1 \times \pi_2)$ is the Rankin--Selberg $L$-function for $\GL(n+1)\times \GL(n)$.

On the other hand, the Gan--Gross--Prasad conjectures provide a far-reaching higher-rank generalization of Waldspurger’s result. Note that $\SO(2,1) \cong \mathrm{PGL}_2$ and $\mathrm{Res}_{E/F}(\mathbb{G}_m)/\mathbb{G}_m \cong \SO(1,1)$. Thus, Waldspurger’s formula naturally suggests a generalization relating automorphic $L$-functions on $\SO(n+1)\times \SO(n)$ to certain automorphic periods. This is the original version of the Gan--Gross--Prasad conjectures. Later, other versions (see \cite{Harris2014} for the case $\mathrm{U}(n+1) \times \mathrm{U}(n)$) were formulated, which enriched the theory substantially (\cite{GGP2012a, GGP2012b, GGP2023}).

For instance, let $E/F$ be a quadratic extension of number fields, and let $W \subseteq V$ be Hermitian spaces over $E$ such that the orthogonal complement $Z = W^{\perp}$ in $V$ is one-dimensional. Let $N \subseteq \mathrm{U}(V)$ be the unipotent radical of a parabolic subgroup $P \subseteq \mathrm{U}(V)$. Consider the triple $(G,H,\xi)$, where 
\[
   G = \mathrm{U}(V) \times \mathrm{U}(W), 
   \quad H = \mathrm{U}(W)\ltimes N \subseteq G,
\]
and $\xi: N(\mathbb{A}_F)\to \mathbb{C}^{\times}$ is a character trivial on $N(F)$ and extending to a character of $H(\mathbb{A}_F)$ that is trivial on $\mathrm{U}(W)(\mathbb{A}_F)$. If $\pi = \pi_V \otimes \pi_W$ is a cuspidal automorphic representation of $G(\mathbb{A}_F)$, we define the automorphic period
\[
   \mathcal{P}_{H,\xi}(\varphi) 
   = \int_{H(F)\backslash H(\mathbb{A}_F)} \varphi(h)\, \xi(h)\, \dd h,
   \qquad \varphi \in \pi.
\]
Let $\pi_E = \pi_{V,E} \otimes \pi_{W,E}$ denote the weak base change of $\pi$ to $\GL_{n+1}(\mathbb{A}_E)\times \GL_n(\mathbb{A}_E)$, and write
\[
   L(s,\pi_E) := L(s,\pi_{V,E}\times \pi_{W,E}).
\]
The Gan--Gross--Prasad conjecture for $\mathrm{U}(n+1)\times \mathrm{U}(n)$ predicts that
\[
   \bigg| \int_{H(F)\backslash H(\mathbb{A}_F)} \varphi(h)\, \xi(h)\, \dd h \bigg|^2
   \;\approx\; L\!\left(\tfrac{1}{2}, \pi_{V,E}\times \pi_{W,E}\right).
\]

For the unitary case $\mathrm{U}(n+1)\times \mathrm{U}(n)$, the Gan--Gross--Prasad conjecture is currently known in general (see \cite{Beuzart2021a, Beuzart2021b, BC2025, BCZ2022, BLZZ2019, Yun2011, Zhang2014} for further details).

In the above cases, when the corank is larger than two, the situation becomes considerably more intricate. For example, in order to generalize Hecke’s integral for $\GL(2)\times \GL(1)$ to $\GL(n)\times \GL(1)$, one needs to apply a projection operator from cusp forms on $\GL(n)$ to cusp forms on the mirabolic subgroup of $\GL(2)$. This projection is realized by taking suitable Fourier coefficients. When $n=2$, this projection operator is precisely the identity (see \cite{JPSS1983} for details).

In analogy with the Rankin--Selberg case, one may also expect refined and generalized Gan--Gross--Prasad conjectures for $\SO(m)\times \SO(n)$ and $\mathrm{U}(m)\times \mathrm{U}(n)$. In this paper, we focus on the special case $(m,n)=(5,2)$, which connects global Bessel periods with the central value of degree-four spinor $L$-functions. For more general discussions on Bessel periods, see \cite{Liu2016}.

\subsection{Bessel periods}

Let $G=\mathrm{PGSp}(4) \simeq \mathrm{SO}(3,2)$. 
Let $\mathcal{H}_2$ denote the Siegel upper half-space of degree two, i.e.~the set of $2\times 2$ symmetric complex matrices with positive definite imaginary part. The group
\[
   G(\mathbb{R})^+ = \{ g \in G(\mathbb{R}) : \nu(g) > 0 \}
\]
acts on $\mathcal{H}_2$ via
\[
   g \langle Z \rangle = (A Z + B)(C Z + D)^{-1}, 
   \quad 
   g = \begin{pmatrix} A & B \\ C & D \end{pmatrix} 
  \in G(\mathbb{R})^+, \; Z \in \mathcal{H}_2,
\]
and the corresponding factor of automorphy is defined by
\[
  J(g,Z) = C Z + D.
\]

Let $(\rho, V_\rho)$ be an algebraic representation of $\GL_2(\mathbb{C})$. A holomorphic map $\Phi : \mathcal{H}_2 \to V_\rho$ is called a Siegel cusp form of weight $\rho$ with respect to $\Gamma_0(N)$ if it vanishes at the cusps and satisfies
\[
   \Phi(\gamma \langle Z \rangle) 
   = \rho(J(\gamma,Z)) \, \Phi(Z),
   \quad 
   \gamma \in \Gamma_0(N),\; Z \in \mathcal{H}_2.
\]

We denote by $S_\rho(\Gamma_0(N))$ the complex vector space of Siegel cusp forms of weight $\rho$ with respect to $\Gamma_0(N)$. Any $\Phi \in S_\rho(\Gamma_0(N))$ admits a Fourier expansion
\[
  \Phi(Z) = 
   \sum_{T>0} a(T,\Phi) 
   \exp\!\bigl(2\pi i \, \mathrm{tr}(TZ)\bigr),
   \quad Z \in \mathcal{H}_2,
\]
where $a(T,\Phi)\in V_\rho$ and $T$ runs over positive definite half-integral symmetric $2\times 2$ matrices.

It is well known that irreducible algebraic representations of $\GL_2(\mathbb{C})$ are parametrized by
\[
   \kappa=(n_1,n_2)\in L
  = \{(n_1,n_2)\in \mathbb{Z}^2 : n_1 \geq n_2\},
\]
and are given explicitly by
\[
   \rho_\kappa := \mathrm{Sym}^{\,n_1-n_2}\otimes {\det}^{n_2}.
\]

Let $S$ be a symmetric $2\times 2$ matrix anisotropic over $\mathbb{Q}$, so that its discriminant $d=\mathrm{disc}(S)$ is not a square in $\mathbb{Q}$. Following Dickson, Pitale, Saha, and Schmidt~\cite{DPSS2020}, we define a subgroup $T_S$ of $\GL(2)$ by
\[
   T_S(\mathbb{Q})
   = \bigl\{ g \in \GL(2,\mathbb{Q}) : {}^t g S g = \det(g)\, S \bigr\}.
\]
Then $T_S(\mathbb{Q}) \simeq K^{\times}$, where $K=\mathbb{Q}(\sqrt{d})$.

We embed $T_S$ into $\mathrm{GSp}(4)$ via
\[
   g \longmapsto
   \begin{pmatrix}
      g & 0 \\
      0 & \det(g)\, {}^t g^{-1}
   \end{pmatrix},
   \qquad g \in T_S.
\]

Next, define the subgroup $N \subseteq \mathrm{GSp}(4)$ by
\[
   N = \Bigl\{ n(X) =
   \begin{pmatrix}
      1 & X \\
      0 & 1
   \end{pmatrix}
   \;\Big|\; {}^t X = X \Bigr\}.
\]
Then the Bessel subgroup is given by $H = T_S N$, which is an extension of $T_S \simeq \SO(2)$ by the unipotent subgroup $N$.

Fix a non-trivial additive character $\psi$ of $\mathbb{Q}\backslash \mathbb{A}_\mathbb{Q}$, and define a character $\theta_S$ on $N(\mathbb{A})$ by
\[
   \theta_S(n(X)) = \psi(\mathrm{tr}(S X)).
\]

Let $\Lambda$ be a character of 
\[
   T_S(\mathbb{Q}) \backslash T_S(\mathbb{A}_\mathbb{Q}) \;\simeq\; K^\times \backslash \mathbb{A}_K^\times,
\]
such that its restriction to $\mathbb{A}_\mathbb{Q}^\times$ is trivial, i.e.~$\Lambda|_{\mathbb{A}_\mathbb{Q}^\times}=1$.

\medskip

Let $\pi$ be a unitary cuspidal generic automorphic representation of $\mathrm{GSp}_4(\mathbb{A}_\mathbb{Q})$ with trivial central character, and let $\varphi \in \pi$ be a cusp form. For $d<0$ a fundamental discriminant, and $\Lambda$ a character of $\mathrm{Cl}_K$ with $K=\mathbb{Q}(\sqrt{d})$, we define the global Bessel period
\begin{equation}\label{eqn:defofbessel}
  B_K(\varphi,\Lambda;\psi) =
  \int_{\mathbb{A}_K^{\times} 
     T_S(K)\backslash T_S(\mathbb{A}_K)}
  \int_{N(K)\backslash N(\mathbb{A}_K)}
     \varphi(tn)\,\Lambda^{-1}(t)\,
     \theta_S^{-1}(n)\, \dd n \, \dd t.
\end{equation}

Let $\mathcal{AI}(\Lambda^{-1})$ denote the automorphic representation of $\GL_2(\mathbb{A}_\mathbb{Q})$ obtained by automorphic induction of $\Lambda^{-1}$ from $\mathbb{A}_K^\times$. It is generated by the adelization of the weight-one theta series
\[
   \theta_{\Lambda^{-1}}(z) = 
   \sum_{0 \neq \mathfrak{a}\subset O_K} 
      \Lambda^{-1}(\mathfrak{a}) \,
      e^{2\pi i N(\mathfrak{a}) z}.
\]

Hence, the Gan--Gross--Prasad conjecture for $\SO(5)\times \SO(2)$ predicts that
\[
   \bigl| B_K(\varphi,\Lambda;\psi) \bigr|^2 
   \;\approx\; 
   L\!\left( \tfrac{1}{2}, \pi \times \mathcal{AI}(\Lambda^{-1}) \right).
\]

\section{Periods and \texorpdfstring{$L$}{L}-functions}\label{sec:GGP}

Any real primitive character with the modulus $d$ must be of the form $\chi_d(\cdot)=(\frac{d}{\cdot})$, where $d$ is a fundamental discriminant~\cite[Theorem 9.13]{MV2007}, i.e., a product of pairwise coprime factors of the form $-4$, $\pm 8$ and $(-1)^{\frac{p-1}{2}}p$, where $p$ is an odd prime.

Recall that $f$ is a cusp form on $\GSp_4$. Let $\pi = \otimes_v \pi_v$ be the irreducible, unitary, cuspidal automorphic representation of $\GSp_4(\mathbb{A}_\mathbb{Q})$ with trivial central character associated with $f$. For each automorphic form $\phi$ in the space of $\pi$, we define the Petersson norm
\[
   \langle \phi,\phi \rangle 
   = \int_{Z(\mathbb{A}_\mathbb{Q})\GSp_4(\mathbb{Q}) \backslash \GSp_4(\mathbb{A}_\mathbb{Q})} 
      \lvert \phi(g) \rvert^2 \, \dd g,
\]
where the measure on $Z(\mathbb{A}_\mathbb{Q}) \GSp_4(\mathbb{Q}) \backslash \GSp_4(\mathbb{A}_\mathbb{Q})$ is the Tamagawa measure.

Furusawa and Morimoto (\cite{FurusawaMorimoto2021} for the case $\Lambda=1$, and \cite{FurusawaMorimoto2024} in general) proved that
\begin{equation}\label{eqn:period}
   \frac{\lvert B_K(f,1;\psi) \rvert^2}{\langle \phi,\phi \rangle}
   = \frac{C_T}{S_\pi} \cdot 
     \frac{\zeta_{\mathbb{Q}}(2)\zeta_{\mathbb{Q}}(4)
           L(1/2,\pi)L(1/2,\pi \times \chi_d)}
          {L(1,\pi,\mathrm{Ad})\,L(1,\chi_d)} 
     \cdot \prod_v J_v(\pi_v),
\end{equation}
where the notation is explained in the proof of Theorem~\ref{thm:indep} and the local factor $J_v$ is defined in \cite{DPSS2020} and \cite{Liu2016}. We note that $J_v(\pi_v)=1$ for almost all finite places. Hence the product over places $v$ is a finite product.

\begin{remark}
A more general refined global Gan--Gross--Prasad conjecture for Bessel periods was formulated by Liu~\cite{Liu2016}. Since $\mathrm{PGSp}_4 \cong \mathrm{SO}(3,2)$, Equation~\eqref{eqn:period} represents a special case of the refined global Gan--Gross--Prasad conjecture for the pair $(\mathrm{SO}(5), \mathrm{SO}(2))$.
\end{remark}

Then our proof of Theorem~\ref{thm:indep} reduce to estimating the $1/2$-th moment of products of quadratic twisted $L$-functions.

\subsection{Mixed moments for quadratic families}\label{sec:quadratic}

Selberg~\cite{Selberg1946} demonstrated that on the critical line, the real and imaginary parts of the logarithm of the Riemann zeta function are distributed like real Gaussian variables with mean zero and variance $\frac{1}{2} \log \log T$, as the imaginary part ranges from $T$ to $2T$.
Montgomery~\cite{Montgomery1973} provided a prediction regarding the pair correlation of zeros of the Riemann zeta function on the critical line, describing the expected distribution of their spacings, then Dyson observed that this predicted distribution matches the pair correlation statistics of eigenvalues from the Gaussian Unitary Ensemble in random matrix theory.

Katz and Sarnak~\cite{KatzSarnak1999} extended this framework to families of $L$-functions over function fields, demonstrating that the distribution of their low-lying zeros corresponds to orthogonal, unitary, or symplectic symmetry types, depending on the family. Building on these ideas, Keating and Snaith~\cite{KeatingSnaith2000a,KeatingSnaith2000b} used the Circular Ensemble from random matrix theory to conjecture that the logarithm of the central values of $L$-functions in various families follows different Gaussian distributions, each with explicit leading constants.
This significantly generalized prior results by Conrey--Ghosh~\cite{ConreyGhosh1992,ConreyGhosh1998} and Balasubramanian--Conrey--Heath-Brown~\cite{BCH1985}.
Their framework was subsequently refined and extended by Conrey--Farmer~\cite{ConreyFarmer2000}, Diaconu--Goldfeld--Hoffstein~\cite{DGH2003}, Conrey--Farmer--Keating--Rubinstein--Snaith~\cite{CFKRS2005}.
A corrected and clarified version in the function field setting was later given by Sawin~\cite{Sawin2020}.

Current research on the central values of $L$-functions focuses primarily on three fronts: deriving asymptotic formulas for low-order moments, establishing lower bounds~\cite{RS2006}, and obtaining upper bounds. For the latter, Soundararajan~\cite{Soundararajan2009} developed a method based on the Generalized Riemann Hypothesis (GRH) that achieves the predicted order (up to a small power of $\log T$), while Harper's refinement~\cite{Harper2013} removes the extraneous $\varepsilon$-power entirely.

In particular, for orthogonal families of $L$-functions, the moments of central values in the range between order $0$ and $1$ (excluding the endpoints) contribute a negative power of $\log$ in their leading-order asymptotics.
This negative logarithmic contribution has concrete applications in arithmetic problems, as it leads to nontrivial savings in analytic estimates.

Traditionally, studies have focused on twisting a single automorphic form.
Building on Chandee's work~\cite{Chandee2011} on shifted moments of the Riemann zeta function, and later the work of Milinovich and Turnage-Butterbaugh~\cite{MTB2014} on integral moments of product of $L$-functions,
it has been observed that when multiple automorphic forms share the same symmetry type under a common twist, their mixed moments, i.e., products of their $L$-values, exhibit statistical independence under suitable conditions.

The combination of the decorrelation phenomenon observed in mixed moments and the negative logarithmic power bounds predicted by the Keating--Snaith conjecture for low-order moments ($0<k<1$) has emerged as a powerful framework in modern analytic number theory.
This interplay plays a central role in several recent breakthroughs.
For instance, Lester and Radziwi\l\l~\cite{LesterRadziwill2020} proved quantum unique ergodicity for half-integral weight automorphic forms, Huang and Lester~\cite{HuangLester2023} investigated the quantum variance of dihedral Maass forms, while Blomer, Brumley, and Khayutin~\cite{BlomerBrumleyKhayutin2022} proved the joint equidistribution conjecture proposed by Michel and Venkatesh in their 2006 ICM proceedings article~\cite{MichelVenkatesh2006}.
Blomer and Brumley~\cite{BlomerBrumley2024} subsequently proved the joint equidistribution of orbits in arithmetic quotients.
J\"a\"asaari, Lester, and Saha~\cite{JLS2023} established sign changes for coefficients of Siegel cusp forms of degree 2, and showed that the mass of Saito--Kurokawa lifted holomorphic cuspidal Hecke eigenforms for $\Sp_4(\mathbb{Z})$ equidistributes on the Siegel modular variety as the weight tends to infinity~\cite{JLS2024}.
Hua, Huang, and Li~\cite{HuaHuangLi2024} established a case of their joint Gaussian moment conjecture (with the holomorphic version discussed in Huang~\cite{Huang2024}). More recently, Chatzakos, Cherubini, Lester, and Risager~\cite{CCLS2025} obtained a logarithmic improvement on Selberg's longstanding bound for the error term in the hyperbolic circle problem over Heegner points with varying discriminants. Hua~\cite{Hua2025quad} demonstrated that for $0 < p < 2$, the $\ell^p$-norm of some quadratic forms in holomorphic Hecke cusp forms tends to zero asymptotically with respect to expansion in a given orthonormal basis of Hecke eigenforms.
See also Hua~\cite{Hua2025quadUni} for several applications related to quadratic twists of $\GL_2$ forms.

The study of central values of $L$-functions related to quadratic characters is an interesting topic (see e.g.~\cite{BP2022,DGH2003,DW2021,GZ2023,GH1985,HS2022,HH2023a,
HH2023b,HH2023c,Jutila1981,RS2015,Shen2022,Sono2020,Soundararajan2000,
Soundararajan2008,Soundararajan2021,SY2010,Young2009,Young2013}).
For the mixed moments of the central values of quadratic twisted symmetric cube $L$-functions associated to multiple forms, we have the following conditional result.

\begin{theorem}[Decorrelation of quadratic twisted $L$-functions]\label{thm:Lbound}
Under the same conditions as in Theorem~\ref{thm:indep}.
Then for any $\ell_1, \dots, \ell_m > 0$ and any $\varepsilon > 0$, we have
\begin{equation}\label{eqn:uppersymL}
\frac{1}{D}
\sum_{\substack{D \le -d \le 2D \\ \mu(|d|)^2 = 1\\
d \equiv a \pmod{M_0}}}
\prod_{i=1}^{m}
L\left(\tfrac{1}{2}, \mathrm{sym}^3 f_i \times \chi_d\right)^{\ell_i}
\ll_{f_1,\dots,f_m,\ell_1,\dots,\ell_m,\varepsilon}
(\log D)^{\sum_{i=1}^{m} \frac{\ell_i(\ell_i - 1)}{2} + \varepsilon}
\end{equation}
as $D \to \infty$.
\end{theorem}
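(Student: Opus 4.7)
\smallskip
\noindent\textbf{Proof plan.} The plan is to apply Soundararajan's GRH-conditional upper-bound method, extended to mixed moments in the spirit of Chandee~\cite{Chandee2011} on shifted moments of $\zeta$ and of Milinovich--Turnage-Butterbaugh~\cite{MTB2014} on products of $L$-functions. Under the assumed GRH, for each $i$ and a parameter $x=D^{1/K}$ with $K=K(\ell_1,\ldots,\ell_m)$ sufficiently large, Soundararajan's inequality applied to $L(s,\sym^3 f_i\otimes\chi_d)$ will yield
\begin{equation*}
\log\bigl|L(\tfrac{1}{2},\sym^3 f_i\otimes\chi_d)\bigr| \le \Re\,\mathcal{P}_i(d) + \mathcal{D}_i + O(1),
\end{equation*}
where $\mathcal{P}_i(d)=\sum_{p\le x}\lambda_{\sym^3 f_i}(p)\chi_d(p)\,p^{-1/2}\,w_x(p)$ is a short prime sum and $\mathcal{D}_i=\tfrac{1}{2}\sum_{p\le\sqrt{x}}p_2(\alpha_{\sym^3 f_i,p})\,p^{-1}\,w_x(p^2)$ is a $d$-independent drift arising from the prime-square term of $-L'/L$. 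Exponentiating and forming the product will bound $\prod_i L(\tfrac{1}{2},\sym^3 f_i\otimes\chi_d)^{\ell_i}$ by $\exp\bigl(\sum_i\ell_i[\Re\mathcal{P}_i(d)+\mathcal{D}_i]\bigr)\cdot e^{O(1)}$, reducing the task to bounding $\tfrac{1}{D}\sum_d\exp\bigl(\sum_i\ell_i\Re\mathcal{P}_i(d)\bigr)$.

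\smallskip
Next, via Soundararajan's dyadic decomposition (splitting $d$ according to the size of each $\Re\mathcal{P}_i(d)$) combined with Gaussian moment bounds for the short Dirichlet polynomials, this average will be controlled by $\exp\bigl(\tfrac{1}{2}\mathrm{Var}_d(\sum_i\ell_i\mathcal{P}_i)\bigr)$; the variance is evaluated by expanding $(\sum_i\ell_i\mathcal{P}_i(d))^{2n}$ into tuples of primes and invoking orthogonality of $\chi_d$ over fundamental discriminants via the quadratic large sieve, which isolates the square-product diagonals. The diagonal piece $\mathrm{Var}_d(\mathcal{P}_i)\sim\log\log D$ should come from the simple pole at $s=1$ of $L(s,\sym^3 f_i\times\sym^3 f_i)=\zeta(s)L(s,\sym^2 f_i)L(s,\sym^4 f_i)L(s,\sym^6 f_i)$, with GRH for the even symmetric-power $L$-functions controlling the non-principal factors. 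The cross-form covariances $\sum_{p\le x}\lambda_{\sym^3 f_i}(p)\lambda_{\sym^3 f_j}(p)/p$ will be $O(1)$ for $i\neq j$ under the assumed GRH for $L(s,\sym^3 f_i\times\sym^3 f_j)$ (entire for distinct forms, assuming $\sym^3 f_i\not\cong\sym^3 f_j$)---this is the decorrelation mechanism. Finally, using $\mathrm{Sym}^2(\sym^3 V_2)=\sym^6 V_2\oplus\sym^2 V_2$ together with the Newton identity $p_2=2h_2-h_1^2$ relating power sums to complete homogeneous symmetric polynomials, one finds $p_2(\alpha_{\sym^3 f_i,p})=\lambda_{\sym^6 f_i}(p)+\lambda_{\sym^2 f_i}(p)-\lambda_{\sym^4 f_i}(p)-1$, yielding $\mathcal{D}_i=-\tfrac{1}{2}\log\log D+O(1)$ after GRH-controlled cancellations in the symmetric-power sums. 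Combining, the total exponent is $\sum_i\bigl(\tfrac{\ell_i^2}{2}-\tfrac{\ell_i}{2}\bigr)\log\log D=\sum_i\tfrac{\ell_i(\ell_i-1)}{2}\log\log D$, producing the claimed bound.

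\smallskip
The hard part will be the uniform joint Gaussian moment estimate: one must bound $\tfrac{1}{D}\sum_d\bigl(\sum_i\ell_i\mathcal{P}_i(d)\bigr)^{2n}$ uniformly in $n$ up to $n\asymp\log D/(\log\log D)^2$. The large-sieve expansion must simultaneously handle the multiple forms and track the cross-form interactions through the Rankin--Selberg inputs, together with an auxiliary dyadic subdivision of $d$ to absorb the contribution of exceptional $d$ for which some $\Re\mathcal{P}_i(d)$ is abnormally large. The residue-class restriction $d\equiv a\pmod{M_0}$ only selects a positive-proportion subfamily and does not affect the main analysis.
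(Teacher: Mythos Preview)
Your plan is essentially the paper's proof: Soundararajan's GRH upper bound for $\log L$, isolation of the prime-square drift via the identity $\Lambda_{\sym^3 f}(p^2)=\lambda_{\sym^6 f}(p)-\lambda_{\sym^4 f}(p)+\lambda_{\sym^2 f}(p)-1$, and a level-set argument with Gaussian moment bounds for the combined polynomial $\sum_i\ell_i\mathcal{P}_i(d)$, where the diagonal variance is $(\sum_i\ell_i^2)\log\log D$ and the off-diagonal covariances are $O(1)$ under GRH for $L(s,\sym^3 f_i\times\sym^3 f_j)$.

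Two small points worth tightening. First, a \emph{fixed} truncation $x=D^{1/K}$ is inconsistent with your later requirement of moments of order $n\asymp\log D/(\log\log D)^2$, since expanding $\mathcal{P}(d)^{2n}$ into Dirichlet polynomials of length $x^{2n}$ forces $x^{2n}\le D$, i.e.\ $n\le K/2$; with $n$ bounded the Gaussian tail bound for $\mathcal{A}(V)$ breaks down once $V\gg\sqrt{K\log\log D}$, and the integral $\int e^V\mathcal{A}(V)\,dV$ does not close. The paper (following Soundararajan) resolves this by letting $x=D^{1/(\varepsilon V)}$ vary with the level $V$ in the distribution-function integral, so that the admissible moment order $n\asymp\varepsilon V$ scales with $V$ all the way up to the Littlewood cutoff $V\ll\log D/\log\log D$. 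Second, your parenthetical ``assuming $\sym^3 f_i\not\cong\sym^3 f_j$'' is not automatic from $f_i\neq f_j$; the paper verifies it (in the proof of its covariance lemma) via Ramakrishnan's multiplicity-one result for symmetric cubes, which in turn uses the Newton--Thorne automorphy of $\sym^4 f_i$ and $\sym^6 f_i$ together with Selberg orthogonality.
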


\begin{remark}
  For each $1 \le i \le m$, let $\mathrm{sym}^3 f_i$ have root number $\epsilon_{\mathrm{sym}^3 f_i}$.
Let $a$ be an integer such that, for every fundamental discriminant $d<0$ with $d \equiv a \pmod{M_0}$, the root numbers satisfy
\begin{equation*}
\epsilon_{\mathrm{sym}^3 f_i}(d) = \epsilon_{\mathrm{sym}^3 f_i} \chi_d(-M_i) = 1, \quad \text{for all } i = 1, \dots, m.
\end{equation*}
If this condition fails, then at least one of the $m$ associated $L$-functions vanishes identically.
\end{remark}

\begin{remark}
  The $\varepsilon$-power can be removed using Harper's method~\cite{Harper2013}, see also~\cite[Theorem 2]{Hagen2024}.
\end{remark}

We list some preliminaries in \S\ref{sec:lemma}, and prove Theorem~\ref{thm:Lbound} in \S\ref{sec:proof}.

\subsection{Proof of Theorem~\ref{thm:indep}}\label{sec:proofofmain}

\begin{proof}
The proof relies on the explicit relation between the Bessel period and automorphic $L$-functions (see Equation~\eqref{eqn:period}). Before proceeding, we clarify some notations. 

In Equation~\eqref{eqn:period}, $\zeta_{\mathbb{Q}}(s) := \pi^{-s/2}\Gamma(s/2)\zeta(s)$ denotes the completed Riemann zeta function, and $S_\pi$ is a certain integral power of $2$ related to the Arthur parameter of $f$. In particular, $S_\pi = 4$ if $L(s,\pi) = L(s,\pi_1)L(s,\pi_2)$ for irreducible cuspidal representations $\pi_i$ of $\GL_2(\mathbb{A}_{\mathbb{Q}})$, and $S_\pi = 2$ if $L(s,\pi) = L(s,\Pi)$ for an irreducible cuspidal representation $\Pi$ of $\GL_4(\mathbb{A}_{\mathbb{Q}})$. 

The measure $dt$ denotes the global Tamagawa measure, and $C_T$ is defined via the factorization $\dd t = C_T \prod_v \dd t_v$. Moreover, $J_v(\pi_v)$ is the local factor as defined in \cite{DPSS2020,FurusawaMorimoto2024}. For almost all finite places $v$, the local factor equals one.

By the assumptions in Theorem~\ref{thm:indep}, we know that
\[
   L(1/2, \sym^3 f_i) \ge 0 \quad \text{and} \quad 
   L(1/2, \sym^3 f_i \times \chi_d) \ge 0.
\]
Under the GRH, we also have 
\[
   L(1, \chi_d) \gg (\log \log |d|)^{-1}.
\]

Hence, combining Equation~\eqref{eqn:period} with Theorem~\ref{thm:Lbound}, we obtain the estimate stated in Theorem~\ref{thm:indep}.
\end{proof}

\section{Preliminaries}\label{sec:lemma}

\begin{lemma}\label{lemma:sumusesummationformula}
Let $r \in \mathbb{N}$.
Then, for $x \le D^{1/(10r)}$ and any real numbers $a_p \ll p^{\varepsilon}$ for any $\varepsilon > 0$, we have
\begin{equation*}
    \sum_{\substack{D \leq -d \leq 2D\\ \mu(|d|)^2 = 1\\ d \equiv a \pmod{N_0}}}
    \left( \sum_{p \le x} \frac{a_p \chi_d(p)}{p^{1/2}} \right)^{2r}
    \ll \frac{(2r)!}{r! \, 2^r} D \left( \sum_{p \le x} \frac{a_p^2}{p} \right)^r.
\end{equation*}
\end{lemma}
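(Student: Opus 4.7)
The plan is to expand the $2r$-th power of the Dirichlet polynomial and interchange the order of summation, reducing the problem to averaging quadratic character values $\chi_d(n)$ over squarefree fundamental discriminants $d$ in the prescribed arithmetic progression. Writing
\begin{equation*}
\left(\sum_{p \le x} \frac{a_p \chi_d(p)}{p^{1/2}}\right)^{2r}
= \sum_{p_1,\dots,p_{2r} \le x} \frac{a_{p_1}\cdots a_{p_{2r}}}{(p_1\cdots p_{2r})^{1/2}} \, \chi_d(p_1\cdots p_{2r}),
\end{equation*}
the task becomes estimating, uniformly for $n = p_1\cdots p_{2r}\le x^{2r} \le D^{1/5}$, the inner sum $\sum_d \chi_d(n)$ over admissible $d$ with $D \le -d \le 2D$, $\mu(|d|)^2 = 1$, and $d \equiv a \pmod{M_0}$.

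First I would separate the outer tuples into those for which $n$ is a perfect square and those for which it is not. The squarefree condition and the congruence condition are detected by standard Möbius inversion and characters mod $M_0$, reducing the inner sum in the non-square case to a quadratic character sum over an interval. A P\'olya--Vinogradov type estimate then gives $\sum_d \chi_d(n) \ll n^{1/2+\varepsilon}$; summed against the coefficients, the total non-square contribution is bounded by $D^{1/10+\varepsilon} x^{O(r)} \ll D^{1/5+\varepsilon}$, which is negligible compared with the expected main term because the hypothesis $x \le D^{1/(10r)}$ was designed precisely to kill this error. In the perfect-square case with $n = m^2$ and $(m,M_0)=1$, the value $\chi_d(n)$ equals $1$ whenever $(d,m)=1$, so the inner sum contributes $D$ times the density of admissible squarefree discriminants coprime to $m$, uniformly up to a negligible error from sieving.

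The combinatorial heart of the argument is counting prime tuples with square product. A tuple $(p_1,\dots,p_{2r})$ yields a square if and only if every prime occurs an even number of times. The dominant contribution comes from tuples consisting of $r$ distinct primes each occurring exactly twice: the number of perfect matchings on $\{1,\dots,2r\}$ is $(2r)!/(r!\,2^r)$, and for each matching the associated sum is
\begin{equation*}
\sum_{\substack{p_1,\dots,p_r \le x \\ \text{distinct}}} \frac{a_{p_1}^2\cdots a_{p_r}^2}{p_1\cdots p_r} \;\le\; \left(\sum_{p \le x} \frac{a_p^2}{p}\right)^r.
\end{equation*}
Tuples where some prime appears with multiplicity $\ge 4$ contribute strictly smaller order (they save a factor of $\log\log x$), so they are absorbed into the main term.

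The main obstacle I anticipate is uniformity: handling the squarefree restriction $\mu(|d|)^2 = 1$ and the progression condition $d \equiv a \pmod{M_0}$ simultaneously with the character sum estimate for $\sum_d \chi_d(n)$, while retaining acceptable dependence on $n$ up to the truncation threshold $D^{1/5}$. Once the Möbius inversion is arranged so that the square-divisor modulus is introduced multiplicatively into the character, and the P\'olya--Vinogradov bound is applied with polynomial dependence on the moduli, the error is comfortably dominated by the constraint $x \le D^{1/(10r)}$, and the combinatorial main term yields the stated bound.
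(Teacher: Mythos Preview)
Your proposal is correct and follows the standard route that the paper implicitly invokes by citing \cite[Lemma~4.3]{LesterRadziwill2020}: expand the $2r$-th power, split according to whether $p_1\cdots p_{2r}$ is a perfect square, bound the non-square contribution via a character-sum estimate (the paper's reference uses Poisson summation in the style of Soundararajan rather than P\'olya--Vinogradov, but either suffices here given $x^{2r}\le D^{1/5}$), and extract the combinatorial factor $(2r)!/(r!\,2^r)$ from the diagonal. The paper itself gives no argument beyond the citation, and your outline is exactly the argument that citation points to.
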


\begin{proof}
Similarly to the proof of \cite[Lemma 4.3]{LesterRadziwill2020}.
\end{proof}

Let $\alpha_{f_i}, \beta_{f_i}$ denote the Satake parameters of $f_i$.
For $p \nmid N_i$, define
\begin{equation}\label{def:Lambdafgh}
\Lambda_{\mathrm{sym}^3 f_i \times \chi_d}(p^n)
= \bigl( \alpha_{f_i}(p)^{3n} + \alpha_{f_i}(p)^n + \beta_{f_i}(p)^n + \beta_{f_i}(p)^{3n} \bigr) \chi_d(p^n).
\end{equation}
In particular, we have the Hecke relation
\begin{equation}\label{eqn:heckerelation}
\Lambda_{\mathrm{sym}^3 f_i \times \chi_d}(p^2)
= \left( \lambda_{\mathrm{sym}^6 f_i}(p)
-\lambda_{\mathrm{sym}^4 f_i}(p)
+\lambda_{\mathrm{sym}^2 f_i}(p) - 1 \right) \delta_{p \nmid d}.
\end{equation}

\begin{lemma}[{\cite[Theorem 2.1]{Chandee2009}}]\label{lemma:LogLfunctions}
Assume GRH for $L(s, \mathrm{sym}^3 f_i \times \chi_d)$. Then for $x > 10$, we have
\begin{equation*}
\log L\left( \tfrac{1}{2}, \mathrm{sym}^3 f_i \times \chi_d \right)
\le \sum_{p^n \le x} \frac{\Lambda_{\mathrm{sym}^3 f_i}(p^n) \chi_d(p^n)}
{n p^{n \left( \frac{1}{2} + \frac{1}{\log x} \right)}}
\cdot \frac{\log \frac{x}{p^n}}{\log x}
+ O_{f_i} \left( \frac{\log |d|}{\log x} + 1 \right),
\end{equation*}
where the implied constant is absolute.
\end{lemma}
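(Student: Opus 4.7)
The plan is to follow Soundararajan's method \cite{Soundararajan2009} for bounding $\log|L(1/2,\cdot)|$ on the critical line, in the form Chandee generalized to automorphic $L$-functions of higher degree \cite{Chandee2009}. Since the lemma as stated is essentially Chandee's Theorem 2.1 specialized to $L(s,\sym^3 f_i \times \chi_d)$, the proof amounts to verifying the hypotheses of Chandee's argument and tracking how the analytic conductor of the twist enters the error term.

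First I would set $s_0 = 1/2 + 1/\log x$ and start from the absolutely convergent Dirichlet series
\begin{equation*}
-\frac{L'}{L}(s, \sym^3 f_i \times \chi_d)
= \sum_{p,\,n \ge 1} \frac{\Lambda_{\sym^3 f_i \times \chi_d}(p^n)\,\log p}{p^{ns}},
\qquad \Re(s) > 1.
\end{equation*}
Integrating against the Mellin kernel $x^w/(w^2 \log x)$ along a vertical line far to the right of $s_0$, and shifting the contour across $w=0$ past the nontrivial zeros of the $L$-function, the residue at $w=0$ reproduces $\log L(s_0, \sym^3 f_i \times \chi_d)$, while the prime sum on the right-hand side of the lemma appears as term-by-term Mellin inversion (the weight $\log(x/p^n)/\log x$ comes directly from the Mellin transform of $x^w/w^2$). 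The residues at $w = \rho - s_0$ give a sum over the nontrivial zeros of $L(s, \sym^3 f_i \times \chi_d)$, while possible poles from the archimedean Gamma factors contribute only $O_{f_i}(1)$.

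Next, under GRH each residue satisfies $|x^{\rho - s_0}| = e^{-1}$ and the zero sum is controlled by a standard Riemann--von Mangoldt count: the number of nontrivial zeros with imaginary part in $[t, t+1]$ is $O(\log(C_{\sym^3 f_i \times \chi_d}\,(|t|+2)))$, where the analytic conductor satisfies $C_{\sym^3 f_i \times \chi_d} \ll_{f_i} |d|^4$. This yields
\begin{equation*}
\frac{1}{\log x}\sum_\rho \frac{|x^{\rho - s_0}|}{|\rho - s_0|^2}
\ll \frac{\log|d|}{\log x} + O_{f_i}(1),
\end{equation*}
which is the stated error term. Passing from $\log L(s_0, \cdot)$ to $\log L(1/2, \cdot)$ by Taylor expansion, combined with a GRH-based bound on $(L'/L)(\sigma, \sym^3 f_i \times \chi_d)$ for $\sigma \in [1/2, 1/2 + 1/\log x]$, introduces at most a negative correction to the real part; this is what makes the inequality one-sided.

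The main obstacle is the uniform zero count: one needs the Riemann--von Mangoldt estimate for $L(s, \sym^3 f_i \times \chi_d)$ with explicit dependence on $|d|$, which in turn follows from the functional equation and Stirling's formula applied to the archimedean local factors (these have bounded degree and depend only on $k_i$, not on $d$). Once this input is in place, the remaining work is bookkeeping of the conductor of $\sym^3 f_i \times \chi_d$ in terms of $N_i$ and $|d|$ rather than any genuinely new analytic estimate.
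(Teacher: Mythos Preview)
The paper gives no proof of this lemma: it is stated as a direct citation of \cite[Theorem~2.1]{Chandee2009} and used as a black box. Your proposal is a correct outline of the Soundararajan--Chandee argument underlying that citation, so there is nothing to compare against and nothing to correct.
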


\begin{lemma}\label{lemma:sumofcoefficients}
Let $1 \le i \le m$, and suppose each $f_i$ is an even-weight Hecke eigenform.
Assume that the forms $f_1, \dots, f_m$ are pairwise distinct.
Assuming the RH, and the GRH for $L(s, \sym^3 f_i \times \sym^3 f_j)$ with $i\neq j$, and for $L(s, \sym^k f_i)$ with $k=2,4,6$ and all $i$.

Then for any $\ell_1, \dots, \ell_m > 0$ and for $2 \le y \le x$, we have
\begin{equation*}
\sum_{y < p \le x} \frac{\left( \sum_{i=1}^{m} \ell_i \lambda_{\mathrm{sym}^3 f_i}(p) \right)^2 \delta_{p \nmid d}}{p}
= \left( \sum_{i=1}^{m} \ell_i^2 \right) \log \frac{\log x}{\log y}
+ O_{f_1, \dots, f_m}(\log\log\log |d|).
\end{equation*}
\end{lemma}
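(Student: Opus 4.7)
The plan is to expand the square and reduce each resulting prime sum to a standard Mertens-type estimate: the ``pure diagonal'' produces the main term, while all other contributions are bounded through GRH applied to non-polar $L$-functions. Expanding
\begin{equation*}
\Bigl(\sum_{i=1}^{m} \ell_i \lambda_{\sym^3 f_i}(p)\Bigr)^{\!2}
= \sum_{i=1}^{m} \ell_i^2\, \lambda_{\sym^3 f_i}(p)^2
+ 2 \sum_{1\le i<j\le m} \ell_i\ell_j\, \lambda_{\sym^3 f_i}(p)\lambda_{\sym^3 f_j}(p),
\end{equation*}
I invoke at each unramified prime two identities: the Clebsch--Gordan decomposition $\sym^3 \otimes \sym^3 = \sym^6 \oplus \sym^4 \oplus \sym^2 \oplus \mathbf{1}$, which gives $\lambda_{\sym^3 f_i}(p)^2 = 1 + \lambda_{\sym^2 f_i}(p) + \lambda_{\sym^4 f_i}(p) + \lambda_{\sym^6 f_i}(p)$, and the Rankin--Selberg identity $\lambda_{\sym^3 f_i}(p)\lambda_{\sym^3 f_j}(p) = \lambda_{\sym^3 f_i \times \sym^3 f_j}(p)$. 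The finitely many ramified primes contribute an amount absorbed into $O_{f_1,\dots,f_m}(1)$.

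Under the hypotheses, each of $L(s,\sym^{2k} f_i)$ for $k=1,2,3$ (cuspidal on $\GL_{2k+1}$ by Kim--Shahidi) and each $L(s,\sym^3 f_i \times \sym^3 f_j)$ for $i\neq j$ (entire, since $\sym^3 f_i = \sym^3 f_j$ would force $f_i = f_j$ by strong multiplicity one, contradicting our assumption) is entire. Perron's formula with a contour pushed just right of the critical line followed by partial summation then yields $\sum_{p\le x} \lambda_\pi(p)/p \ll_{\pi} 1$ on GRH for each such $\pi$. Combining this with the classical Mertens identity $\sum_{y<p\le x} 1/p = \log(\log x / \log y) + O(1/\log y)$ for the constant contribution on the diagonal, I obtain
\begin{equation*}
\sum_{y<p\le x} \frac{\bigl(\sum_i \ell_i \lambda_{\sym^3 f_i}(p)\bigr)^2}{p}
= \Bigl(\sum_i \ell_i^2\Bigr) \log\frac{\log x}{\log y} + O_{f_1,\dots,f_m}(1).
\end{equation*}

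Finally I account for the cutoff $\delta_{p\nmid d}$. The defect is controlled, via Deligne's bound $|\lambda_{\sym^3 f_i}(p)| \le 4$, by a constant times $\sum_{p\mid d,\, p>y} 1/p$. To reach the sharp $\log\log\log|d|$ rather than the cruder $\log\log|d|$, I observe that, for $\omega(d) = k$ fixed, $\sum_{p\mid d} 1/p$ is maximized when $d$ is the product of the first $k$ primes, so $\sum_{p\mid d} 1/p \le \log\log p_{\omega(d)} + O(1)$; the elementary prime-counting bound $\omega(d) \ll \log|d|/\log\log|d|$ yields $p_{\omega(d)} \ll \log|d|$, hence $\sum_{p\mid d} 1/p \le \log\log\log|d| + O(1)$. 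The main obstacle is precisely this last step: a careless handling of the primes dividing $d$ produces the inadmissible $\log\log|d|$, which would be fatal when the lemma is fed into the large-deviation machinery underlying Theorem~\ref{thm:Lbound}. Every other ingredient reduces to routine GRH--Mertens estimates for the relevant entire $L$-functions.
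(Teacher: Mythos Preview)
Your argument is correct and follows the same route as the paper's: expand the square, decompose the diagonal via Clebsch--Gordan, and bound each non-trivial piece by a GRH-conditional Perron/contour-shift estimate; you are in fact more explicit than the paper on the $\delta_{p\nmid d}$ defect and the resulting $\log\log\log|d|$. Two attribution slips worth fixing: cuspidality of $\sym^6 f_i$ requires Newton--Thorne rather than Kim--Shahidi (who stop at $\sym^4$), and the implication $\sym^3 f_i \cong \sym^3 f_j \Rightarrow f_i \cong f_j$ is not ``strong multiplicity one'' but a theorem of Ramakrishnan---the paper itself invokes Ramakrishnan together with a Selberg-orthogonality check here.
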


\begin{proof}
For algebraic regular Hecke eigenform $f_i$,
from Gelbart--Jacquet~\cite{GelbartJacquet1978},
each $\sym^2 f_i$ is a self-dual cuspidal form,
from Kim--Shahidi~\cite{KimShahidi2001,KimShahidi2002a,KimShahidi2002b}, each $\sym^3 f_i$ is a self-dual cuspidal form,
and from Newton--Thorne~\cite{NewtonThorne2021a,NewtonThorne2021b}, $\sym^4 f_i$ and $\sym^6 f_i$ are self-dual cuspidal forms.
Then from Selberg's orthogonality~\cite{LiuWangYe2005} of $\sym^6 f_i \times \sym^2 f_j$ and $\sym^4 f_i \times \sym^2 f_j$ for $i\neq j$, there is no pole at $s=1$ for $L(s,\sym^5 f_i\times (f_i\boxtimes \sym^2 f_j))$, and by Ramakrishnan~\cite{Ramakrishnan2011,Ramakrishnan2015}, the $\sym^3 f_i$ are pairwise distinct.

Assuming the GRH for $L(s, \sym^3 f_i \times \sym^3 f_j)$ with $i\neq j$, the function $\log L(s, \sym^3 f_i \times \sym^3 f_j)$ is analytic for $\Re(s) \geq \frac{1}{2} + \frac{1}{\log x}$.
 By a classical argument of Littlewood \cite[(14.2.2)]{Titchmarsh1986}, in this region we have
  \begin{equation}\label{eqn:logLbound}
    |\log L(s, \sym^3 f_i \times \sym^3 f_j)|
    \ll \left(\Re(s) - \frac{1}{2}\right)^{-1} \log |\Im(s)|.
  \end{equation}

  For $\Re(s) > 0$, we have
  \begin{equation*}
    \sum_{n} \frac{|\Lambda_{\sym^3 f_i}(n) \Lambda_{\sym^3 f_j}(n)|}{n^{1+s}} \ll 1,
  \end{equation*}
  and Deligne's bound yields
  \begin{equation*}
    \sum_{a \geq 2} \sum_{p^a \leq x} \frac{|\Lambda_{\sym^3 f_i}(p^a)
    \Lambda_{\sym^3 f_j}(p^a)|}{p^a} \ll 1.
  \end{equation*}

  Applying Perron's formula for $x \geq 2$ gives
  \begin{multline*}
    \sum_{p \leq x} \frac{\lambda_{\sym^3 f_i}(p)\lambda_{\sym^3 f_j}(p)}{p}
    = \sum_{p \leq x} \frac{\Lambda_{\sym^3 f_i}(p)\Lambda_{\sym^3 f_j}(p)}{p} \\
    = \frac{1}{2\pi i} \int_{1 - ix\log x}^{1 + ix\log x}
    \log L(s + 1, \sym^3 f_i \times \sym^3 f_j) x^s \frac{\dd s}{s} \\
    + O\left(\frac{x \sum_{p \text{ prime}} \frac{|\lambda_{\sym^3 f_i}(p)\lambda_{\sym^3 f_j}(p)|}{p^2}}{x \log x}\right)
    + O(1).
  \end{multline*}

  Shifting the contour to $\Re(s) = -\frac{1}{2} + \frac{1}{\log x}$, we encounter a simple pole at $s = 0$ with residue $\log L(1, \sym^3 f_i \times \sym^3 f_j)$. The upper horizontal contour is bounded by
  \begin{multline*}
    \ll \frac{1}{x \log x}
    \int_{-\frac{1}{2} + \frac{1}{\log x} + ix \log x}^{1 + ix \log x}
    |\log L(s + 1, \sym^3 f_i \times \sym^3 f_j)| |x^s| |\dd s| \\
    \ll \frac{\log x \log(x \log x)}{x \log x}
    \int_{-\frac{1}{2}}^{1} x^u \dd u \ll 1,
  \end{multline*}
  and similarly for the lower horizontal contour.

  From \eqref{eqn:logLbound}, we obtain for $x \geq 2$:
  \begin{multline*}
    \sum_{p \leq x} \frac{\lambda_{\sym^3 f_i}(p)\lambda_{\sym^3 f_j}(p)}{p}
    = \log L(1, \sym^3 f_i \times \sym^3 f_j) \\
    + O\left(1 + \frac{\log x}{\sqrt{x}} \int_{-x \log x}^{x \log x}
    \frac{\log u}{1 + |u|} \dd u \right).
  \end{multline*}

  Applying this estimate twice yields for $z \geq 3$:
  \begin{equation*}
    \left| \sum_{2 < p \leq z} \frac{\lambda_{\sym^3 f_i}(p)\lambda_{\sym^3 f_j}(p)}{p} \right| \ll 1.
  \end{equation*}

  Similarly, for $k=2,4,6$, we have
    \begin{equation*}
    \left| \sum_{2 < p \leq z} \frac{\lambda_{\sym^k f_i}(p)}{p} \right| \ll 1.
  \end{equation*}
  Then we finish the proof with using $\sym^3 f_i(p)^2=1+\sum_{k=1}^{3}\sym^{2k} f_i(p)$.
\end{proof}

\begin{remark}
Here we use Selberg's orthogonality, which relies on Deligne's bound.
For more general automorphic forms, one typically requires Rudnick and Sarnak's Hypothesis H, which was recently proved by Jiang~\cite{Jiang2025}.
\end{remark}

Before stating the next lemma, we introduce the following notation.
For parameters $2 \le y \le x$, define
\begin{equation*}
\mathcal{P}(d; x, y) = \sum_{p \le y} \frac{\left( \sum_{i=1}^{m} \ell_i \lambda_{\mathrm{sym}^3 f_i}(p) \right) \chi_d(p)}{p^{\frac{1}{2} + \frac{1}{\log x}}} \left(1 - \frac{\log p}{\log x} \right),
\end{equation*}
and define
\begin{align*}
\mathcal{A}(V; x) &= \#\left\{ D \le -d \le 2D : \mu(|d|)^2 = 1,\ d \equiv a \pmod{N_0},\ \mathcal{P}(d; x, x) > V \right\}, \\
\sigma^2(D) &= \left( \sum_{i=1}^{m} \ell_i^2 \right) \log\log D.
\end{align*}

\begin{lemma}\label{lemma:AXY}
Assume the setup above.
Let $C \ge 1$ be fixed and let $\varepsilon > 0$ be sufficiently small.
Then, for all
\[
\sqrt{\log\log D} \le V \le C \frac{\log D}{\log\log D},
\]
we have the bound
\begin{equation*}
\mathcal{A}\left(V; D^{\frac{1}{\varepsilon V}} \right)
\ll_{f_1, \dots, f_m} D \left(
e^{-\frac{(1 - 2\varepsilon)V^2}{2\sigma(D)^2}} (\log\log D)^3
+ e^{-\frac{\varepsilon}{11} V \log V}
\right).
\end{equation*}
\end{lemma}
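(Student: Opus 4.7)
The plan is to apply Soundararajan's moment method~\cite{Soundararajan2009}, in the form used by Lester--Radziwi\l\l~\cite{LesterRadziwill2020}, to the Dirichlet polynomial $\mathcal{P}(d;x,x)$. The two terms on the right-hand side arise from two separate Markov estimates: a Gaussian one via a short-prime / long-prime splitting (giving the first term), and a direct high-moment bound at the maximum admissible moment (giving the second). Throughout I fix $x = D^{1/(\varepsilon V)}$, so that Lemma~\ref{lemma:sumusesummationformula} controls $\sum_d \mathcal{P}(d;x,x)^{2r}$ for every $r \le \varepsilon V/10$.

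For the Gaussian bound I set $y := x^{1/\log\log D}$ and write $\mathcal{P}(d;x,x) = \mathcal{P}_1(d) + \mathcal{P}_2(d)$, with $\mathcal{P}_1$ supported on primes $p \le y$ and $\mathcal{P}_2$ on $y < p \le x$, and split the threshold as $V = V_1 + V_2$ with $V_2 := V/\log\log D$, so that $V_1 = V(1+o(1))$ and $\{\mathcal{P}(d;x,x) > V\} \subseteq \{\mathcal{P}_1(d) > V_1\} \cup \{\mathcal{P}_2(d) > V_2\}$. Lemma~\ref{lemma:sumofcoefficients} gives $\sigma_1^2 := \sum_{p \le y}(\sum_i \ell_i \lambda_{\sym^3 f_i}(p))^2/p = (\sum_i \ell_i^2)\log\log y + O(\log\log\log D)$ and $\sigma_2^2 = O(\log\log\log D)$; for $V$ in the Gaussian regime $\sigma_1^2 = \sigma(D)^2(1+o(1))$. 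Markov applied to $\mathcal{P}_1(d)^{2r_1}$, with $r_1 := \lfloor V_1^2/(2\sigma_1^2)\rfloor$ (which fits within the admissible moment budget $r_1 \le \varepsilon V\log\log D/10$), combined with Lemma~\ref{lemma:sumusesummationformula} and Stirling's formula, yields
\[
\bigl|\{d : \mathcal{P}_1(d) > V_1\}\bigr| \ll D\sqrt{r_1}\left(\frac{2r_1 \sigma_1^2}{eV_1^2}\right)^{r_1} \ll D(\log\log D)^3 \exp\!\left(-\frac{(1-2\varepsilon)V^2}{2\sigma(D)^2}\right),
\]
the polylog factor absorbing the Stirling tail, the perturbation $\sigma_1^2 \neq \sigma(D)^2$, and the gap $V_1 < V$. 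A parallel Markov estimate on $\mathcal{P}_2(d)^{2r_2}$ with $r_2 \asymp \varepsilon V/10$ controls $|\{d : \mathcal{P}_2(d) > V_2\}|$ by a quantity negligible compared to the above.

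For the $V\log V$ bound I apply Markov directly to $\mathcal{P}(d;x,x)^{2r}$ with $r := \lfloor \varepsilon V/10\rfloor$; Lemma~\ref{lemma:sumusesummationformula}, together with $\sigma(D)^2 \ll \log\log D$ and Stirling, yields
\[
\mathcal{A}(V;x) \ll D\left(\frac{2r\sigma(D)^2}{eV^2}\right)^r \ll D\exp\!\left(-\frac{\varepsilon V}{10}\log\frac{V}{c\log\log D}\right),
\]
which is bounded by $D\exp(-(\varepsilon/11)V\log V)$ throughout the stated range of $V$, the slack from $\varepsilon/10$ to $\varepsilon/11$ absorbing the correction $\log(c\log\log D)$. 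Summing the two estimates produces Lemma~\ref{lemma:AXY}.

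The main obstacle I anticipate is not the strategy itself but the parameter tuning: the specific choice $y = x^{1/\log\log D}$ is forced by the simultaneous requirements that the optimal moment $r_1 = V_1^2/(2\sigma_1^2)$ both lie within the moment budget $\log D/(10\log y)$ and leave $\sigma_1^2$ within $(1+o(1))$ of $\sigma(D)^2$, uniformly in $V$ across the entire range $\sqrt{\log\log D} \le V \le C\log D/\log\log D$. A less delicate split would either overshoot the moment budget or shrink the variance below $\sigma(D)^2$, degrading the Gaussian exponent. The constants $\varepsilon/11$ and the exponent $3$ on $\log\log D$ are not optimal but follow from standard slack in the Markov and Stirling steps; sharpening them is possible but unnecessary for the intended application to Theorem~\ref{thm:Lbound}.
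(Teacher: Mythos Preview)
Your strategy (Soundararajan's method, $x = D^{1/(\varepsilon V)}$, the split at $y = x^{1/\log\log D}$, Markov via Lemmas~\ref{lemma:sumusesummationformula} and~\ref{lemma:sumofcoefficients}) matches the paper's, but two parameter choices derail it.

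The fatal one is $V_2 := V/\log\log D$. With $r_2 \asymp \varepsilon V/10$ and $\sigma_2^2 \ll \log\log\log D$, the Markov ratio is $2r_2\sigma_2^2/(eV_2^2) \asymp \varepsilon(\log\log D)^2(\log\log\log D)/V$, which at $V = \sqrt{\log\log D}$ is of order $(\log\log D)^{3/2}$; your bound on $|\{d:\mathcal{P}_2(d) > V_2\}|$ is therefore worse than trivial there. Since the trivial count $D$ already exceeds the Gaussian target $De^{-(1-2\varepsilon)V^2/(2\sigma(D)^2)}(\log\log D)^3$ once $V$ is a bit above $\sqrt{\log\log D}$, the union bound $\mathcal{A}(V;x) \le |\{\mathcal{P}_1>V_1\}| + |\{\mathcal{P}_2>V_2\}|$ gives nothing in most of the range. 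The paper instead takes $V_1 = (1-\varepsilon)V$, $V_2 = \varepsilon V$: then the ratio is $\ll (\log\log\log D)/(\varepsilon V) = o(1)$ throughout, and $|\{\mathcal{P}_2 > V_2\}| \ll De^{-(\varepsilon/11)V\log V}$ genuinely holds for all $V \ge \sqrt{\log\log D}$.

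Separately, your claim that the direct Markov on the full $\mathcal{P}$ with $r = \lfloor\varepsilon V/10\rfloor$ yields $De^{-(\varepsilon/11)V\log V}$ ``throughout the stated range'' is false: the slack $\varepsilon/10 \to \varepsilon/11$ absorbs the correction $\log(c\log\log D)$ only when $\log V \ge 11\log(c\log\log D)$, i.e.\ roughly $V \gtrsim (\log\log D)^{11}$, and for $V \le c\log\log D$ your exponent $-(\varepsilon V/10)\log(V/(c\log\log D))$ is even positive. The paper does not bound the full $\mathcal{P}$ directly; it treats $\mathcal{P}_1$ in two cases for $r$ (the Gaussian choice $r = \lfloor V_1^2/(2\sigma(D)^2)\rfloor$ when this respects the budget $r \le \varepsilon V\log\log D/10$, and $r = \lfloor\varepsilon V/10\rfloor$ otherwise), and combines with the $\mathcal{P}_2$-estimate above.
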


\begin{proof}
Throughout the proof, we assume that $\varepsilon > 0$ is sufficiently small, such that $\varepsilon V$ remains small.
We restrict to the range
\[
\sqrt{\log\log D} \le V \le C \frac{\log D}{\log\log D}.
\]

Following the optimization method of Soundararajan, we set the length of the Dirichlet polynomial to be $x = D^{1/(\varepsilon V)}$.
We decompose $\mathcal{P}(d; x, x)$ as $\mathcal{P}_1(d) + \mathcal{P}_2(d)$, where $\mathcal{P}_1(d) = \mathcal{P}(d; x, z)$ with $z = x^{1/\log\log D}$.
This choice ensures $\sum_{z \le p \le x} \frac{1}{p} \ll \log\log\log D$.

Let $V_1 = (1 - \varepsilon)V$ and $V_2 = \varepsilon V$.
Then, if $\mathcal{P}(d; x, x) > V$, we must have either
\begin{equation}\label{eqn:case1}
\mathcal{P}_1(d) > V_1,
\end{equation}
or
\begin{equation}\label{eqn:case2}
\mathcal{P}_2(d) > V_2.
\end{equation}

Using Lemma~\ref{lemma:sumusesummationformula} and Lemma~\ref{lemma:sumofcoefficients}, we find that for parameters satisfying $r \le \frac{\varepsilon V}{10} \log\log D$ and $z \ll D^{1/(10r)}$, the number of squarefree integers $d$ with $D \le -d \le 2D$ and $d \equiv a \pmod{N_0}$ satisfying \eqref{eqn:case1} is bounded by
\begin{equation*}
\frac{1}{V_1^{2r}} \sum_{\substack{D \le -d \le 2D\\ \mu(|d|)^2 = 1\\ d \equiv a \pmod{N_0}}} \mathcal{P}_1(d)^{2r}
\ll \frac{(2r)!}{V_1^{2r} r! 2^r} D (\log\log D)^3 \sigma(D)^{2r}.
\end{equation*}

We now consider two cases for the parameter $r$:
\begin{itemize}
\item If $V \le \frac{\varepsilon}{10} \sigma(D)^2 \log\log D$, set $r = \left\lfloor \frac{V_1^2}{2\sigma(D)^2} \right\rfloor$.
\item Otherwise, set $r = \left\lfloor \frac{\varepsilon V}{10} \right\rfloor$.
\end{itemize}

This yields the estimate
\begin{multline*}
\#\left\{ D \le -d \le 2D : \mu(|d|)^2 = 1,\ d \equiv a \pmod{N_0},\ \mathcal{P}_1(d) > V_1 \right\}
\\ \ll_{f_1, \dots, f_m} D \left(
e^{-\frac{(1 - 2\varepsilon)V^2}{2\sigma(D)^2}} (\log\log D)^3
+ e^{-\frac{\varepsilon}{11} V \log V}
\right).
\end{multline*}

To bound the number of $d$ satisfying \eqref{eqn:case2}, we take $r = \left\lfloor \frac{\varepsilon V}{10} \right\rfloor$.
Note that $x \ll D^{1/(10r)}$.
Using Lemma~\ref{lemma:sumusesummationformula} and Lemma~\ref{lemma:sumofcoefficients} again, we obtain
\begin{equation*}
\frac{1}{V_2^{2r}} \sum_{\substack{D \le -d \le 2D\\ \mu(|d|)^2 = 1\\ d \equiv a \pmod{N_0}}} \mathcal{P}_2(d)^{2r}
\ll D (\log\log D)^3 \frac{(2r)!}{r!}
\left( \frac{C}{V_2^2} \log\log\log D \right)^r
\ll_{f_1, \dots, f_m} D e^{-\frac{\varepsilon}{11} V \log V}.
\end{equation*}

Combining the two estimates completes the proof.
\end{proof}

\section{Proof of Theorem~\ref{thm:Lbound}}\label{sec:proof}

We establish Theorem~\ref{thm:Lbound} by applying Soundararajan's method \cite{Soundararajan2009}.

\begin{proof}
Using the relation~\eqref{eqn:heckerelation} and bounding the contribution from terms with $n \geq 3$, we obtain the decomposition
\begin{multline}\label{eqn:Lambdasumsym2fuj}
    \sum_{p^n \leq x} \frac{\Lambda_{\mathrm{sym}^3 f_i \times \chi_d}(p^n)}{n p^{n(\frac{1}{2}+\frac{1}{\log x})}} \frac{\log \frac{x}{p^n}}{\log x}
    = \sum_{p \leq x} \frac{\lambda_{\mathrm{sym}^3 f_i}(p) \chi_d(p)}{p^{\frac{1}{2}+\frac{1}{\log x}}} \frac{\log \frac{x}{p}}{\log x} \\
    + \frac{1}{2} \sum_{p \leq \sqrt{x}} \frac{(\lambda_{\sym^6 f_i}(p)-\lambda_{\sym^4 f_i}(p)+\lambda_{\sym^2 f_i}(p)-1)\delta_{p\nmid d}}{p^{1+\frac{2}{\log x}}} \frac{\log \frac{x}{p^2}}{\log x}
    + O_{f_i}(1).
\end{multline}

Applying Lemma~\ref{lemma:sumofcoefficients} to the second sum in~\eqref{eqn:Lambdasumsym2fuj} yields
\begin{equation}\label{eqn:secondterms2}
    -\frac{1}{2}\log\log x + O_{f_i}(\log\log\log D).
\end{equation}

Define the key quantities
\begin{equation*}
    \eta(D) := \left(-\frac{1}{2}+\varepsilon\right) \left(\sum_{i=1}^{m}\ell_i\right) \log\log D,
\end{equation*}
and
\begin{equation*}
    \mathcal{L}(d) := \prod_{i=1}^{m} L(\frac{1}{2}, \mathrm{sym}^3 f_i \times \chi_d)^{\ell_i},
\end{equation*}
with the counting function
\begin{equation*}
    \mathcal{B}(V) := \#\left\{D\leq -d\leq 2D : \mu(|d|)^2=1,~d\equiv a\!\!\!\!\pmod{N_0},~\log \mathcal{L}(d) > V \right\}.
\end{equation*}

By integration by parts, we have
\begin{equation*}
    \sum_{\substack{D\leq\sigma d\leq 2D\\ \mu(|d|)^2=1\\
    d\equiv a\!\!\!\!\pmod{N_0}}} \mathcal{L}(d)
    = -\int_{\mathbb{R}} e^V \, d\mathcal{B}(V)
    = \int_{\mathbb{R}} e^V \mathcal{B}(V) \, dV
    = e^{\eta(D)} \int_{\mathbb{R}} e^V \mathcal{B}(V+\eta(D)) \, dV.
\end{equation*}

Under GRH, a Littlewood-type bound (see~\cite[Corollary 1.1]{Chandee2009} or~\cite[\S 4]{ChandeeSoundararajan2011}) implies
\begin{equation*}
    \log \mathcal{L}(d) \leq C \frac{\log D}{\log\log D}
\end{equation*}
for some constant $C > 1$. Therefore, in the above integral, we may restrict to the range
\begin{equation*}
    \sqrt{\log\log D} \leq V \leq C\frac{\log D}{\log\log D},
\end{equation*}
while for smaller $V$ we trivially bound the contribution by $D+1$.

Set
\begin{equation*}
    x := D^{1/(\varepsilon V)}.
\end{equation*}
Then, for
\begin{equation*}
    \sqrt{\log\log D} \leq V \leq (\log\log D)^4,
\end{equation*}
we have
\begin{equation*}
    -\frac{1}{2} \left(\sum_{i=1}^{m}\ell_i\right) \log\log D + O_{f_1,\dots,f_m}(\log\log\log D) \leq \eta(D).
\end{equation*}

From Lemma~\ref{lemma:LogLfunctions} and~\eqref{eqn:secondterms2}, we deduce
\begin{equation*}
    \mathcal{B}(V+\eta(D)) \leq \mathcal{A}(V(1-2\varepsilon); x),
\end{equation*}
which holds for $\sqrt{\log\log D} \leq V \leq (\log\log D)^4$, and remains valid for $V \geq (\log\log D)^4$ since in that range $V+\eta(D) = V(1 + o(1))$.

Combining these estimates with Lemma~\ref{lemma:AXY}, we obtain for some absolute constant $C > 0$:
\begin{multline*}
    \sum_{\substack{D\leq\sigma d\leq 2D\\ \mu(|d|)^2=1\\
    d\equiv a\!\!\!\!\pmod{N_0}}} \mathcal{L}(d)
    \ll D e^{\eta(D)} \int_{\sqrt{\log\log D}}^{C\frac{\log D}{\log\log D}}
    e^V \left( e^{-\frac{(1-\varepsilon)V^2}{2\sigma(D)^2}} (\log\log D)^3 + e^{-\varepsilon V\log V} \right) dV \\
    \ll D (\log D)^\varepsilon \cdot e^{\eta(D) + \frac{\sigma(D)^2}{2}}
    \ll D (\log D)^{\frac{\sum_{i=1}^{m}\ell_i(\ell_i-1)}{2} + \varepsilon},
\end{multline*}
where in the last step we use the Gaussian integral identity
\begin{equation*}
    \int_{\mathbb{R}} e^{-\frac{x^2}{2\sigma^2} + x} \, dx = \sqrt{2\pi} \sigma \, e^{\frac{\sigma^2}{2}}.
\end{equation*}

This completes the proof.
\end{proof}

\section*{Acknowledgements}

The authors would like to sincerely thank Professor Dihua Jiang for providing helpful comments and useful references.
The authors sincerely thank Zhaolin Li for helpful discussions.
S.H. would like to thank Professors Bingrong Huang and Philippe Michel for their constant encouragement.
X.M. would like to thank Professor Valentin Blomer for his constant encouragement.
S.H. was partially supported by the National Key R\&D Program of China (No. 2021YFA1000700) and NSFC (No. 12031008).
X.M. was partially supported by the ERC Advanced Grant 101054336 and Germany's Excellence Strategy grant EXC-2047/1-390685813.
This work was completed and partially supported during the Aarhus Automorphic Forms Conference in August 2025.

\section*{Conflict of interest statement}

All authors contributed equally throughout the research process.
The authors' names are listed in alphabetical order, and all authors serve as co-corresponding authors for this submission.


\end{document}